\newtheorem{theorem}{Theorem}[section]
\newtheorem{lemma}[theorem]{Lemma}
\numberwithin{equation}{section} \theoremstyle{definition}
\def\span{\operatorname{span}}
\newcommand{\ZZ}{{\mathbb Z}}
\newcommand{\C}{\ensuremath{\mathbb C}\xspace}
\renewcommand{\a}{\ensuremath{\alpha}}
\newcommand{\h}{\ensuremath{\mathfrak{H}}}
\newcommand{\Z}{\ensuremath{\mathbb{Z}}\xspace}
\renewcommand{\phi}{\varphi}
\renewcommand{\leq}{\leqslant}
\def\mu{\mathfrak{u}}
\def\sl{\mathfrak{sl}}
\def\gl{\mathfrak{gl}}
\def\span{\text{span}}
\begin{document}
\title[Irreducible Witt modules]{Irreducible weight  modules over Witt algebras with
infinite dimensional weight spaces}
\author{Genqiang Liu and Kaiming zhao}
\date{}\maketitle
\begin{abstract} Let $d>1$ be an integer.
In 1986, Shen defined a class of weight modules $F^\alpha_b(V)$ over
the Witt algebra $\mathcal{W}_d$ for  $\a\in\C^d$, $b\in\C$, and an
irreducible module $ V$ over the special linear Lie algebra $\sl_d$.
In 1996, Eswara Rao determined the necessary and sufficient
conditions for these modules to be irreducible when $V$ is finite
dimensional. In this note, we will determine the necessary and
sufficient conditions for   all these modules $F^\alpha_b(V)$  to be
irreducible where $V$ is not necessarily finite dimensional.
Therefore we obtain a lot of irreducible $\mathcal{W}_d$-modules
with infinite dimensional weight spaces.
\end{abstract}
\vskip 10pt \noindent {\em Keywords:}  Witt algebra, weight module, irreducible  module

\vskip 5pt
\noindent
{\em 2010  Math. Subj. Class.:}
17B10, 17B20, 17B65, 17B66, 17B68

\vskip 10pt

\section{Introduction}

We denote by $\mathbb{Z}$, $\mathbb{Z}_+$, $\mathbb{N}$ and
$\mathbb{C}$ the sets of  all integers, nonnegative integers,
positive integers and complex numbers, respectively. Let $d>1$ be an integer.

Representation theory for  infinite-dimensional Lie algebras has
been attracting extensive attentions of many mathematicians and
physicists. These Lie algebras include the Witt algebras
$\mathcal{W}_d$ which is the derivation algebra of the Laurent
polynomial algebra $A_d=\C[x_1^{\pm1},x_2^{\pm1},\cdots,
x_d^{\pm1}]$. The algebra $\mathcal{W}_d$ is a natural higher rank
generalization of the Virasoro algebra, which has many applications
to different branches of mathematics and physics (see \cite{M,
L1,L2,L3,L4,L5}) and at the same time a much more complicated
representation theory.

The weight representation theory of Witt algebras was recently
studied by many experts; see \cite{B, E1, E2, BMZ, GLZ,L3, L4, L5,
MZ2,Z}. In 1986,  Shen defined a class of modules $F^\alpha_b(V)$
over the Witt algebra $\mathcal{W}_d$ for  $\a\in\C^d$, $b\in\C$,
and an irreducible module $ V$ over the special linear Lie algebra
$\sl_d$, see \cite{Sh}, which were also given by Larsson in 1992,
see \cite{L3}. In 1996, Eswara Rao determined the necessary and
sufficient conditions for   these modules to be irreducible when $V$
is finite dimensional, see \cite{E1}. A simplified proof was given
in \cite{GZ}. It is natural to study the irreducibility of  these
modules $F^\alpha_b(V)$ when $V$ is infinite dimensional. This is
what the present paper will do. In this manner we obtain a lot of
irreducible modules over the Witt algebra $\mathcal{W}_d$  with
infinite dimensional weight spaces.

We have to mention the remarkable work by Billig and Futorny
\cite{BF} in which they proved very recently that irreducible
modules for $\mathcal{W}_d$ with finite-dimensional weight spaces
fall in two classes: (1) modules of the highest weight type and (2)
irreducible modules from  $F^\alpha_b(V)$. So the next task in
representation theory of Witt algebras  is to study irreducible
modules over the Witt algebra $\mathcal{W}_d$  with infinite
dimensional weight spaces, and irreducible non-weight modules. This
is the second reason of this paper.

For irreducible weight modules (not necessary with finite
dimensional weight spaces) over $\mathcal{W}_d$, all weight sets
were explicitly determined in \cite{MZ2}, while some non-weight
irreducible $\mathcal{W}_d$-modules were constructed in \cite{TZ}.

In this paper we actually prove that,  for any   infinite
dimensional irreducible $\sl_d$-module  $V$, any $\alpha\in\C^d$ and
$b\in\C$, the $\mathcal{W}_d$-module $F^\alpha_b(V)$ is always
irreducible, see Theorem 2.4. We also determine the necessary and
sufficient conditions for two $\mathcal{W}_d$-module $F^\alpha_b(V)$
to be isomorphic, see Theorem 2.6.  The main trick we employed in
our proof is a characterization of finite dimensional irreducible
modules over $\sl_d$, see Lemma 2.3, which makes the proof  very
short, and very different from that for finite dimensional
irreducible $\sl_d$-module  $V$, see \cite{E1, GZ}.

Note that $F^\alpha_b(V)$ are polynomial modules in the sense of
\cite{BB,BZ}.

\section{$\mathcal{W}_d$-modules from $\sl_d$-modules}

For
 a positive integer $d>1$, $\ZZ^{d}$ denotes the direct sum of $d$ copies of $\ZZ$.
 For
any $a=(a_1,\cdots, a_d)^T \in \Z_+^d$ and $n=(n_1,\cdots,n_d)^T
\in\C^d$, we denote $n^{a}=n_1^{a_1}n_2^{a_2}\cdots n_d^{a_d}$,
where $T$ means taking transpose of the matrix.  Let $\gl_d$ be the
Lie algebra of all $d \times d$ complex matrices, $\sl_d$ the
subalgebra of $\gl_d$ consisting of all traceless matrices. For $1
\leq i, j \leq d$ we use $E_{ij}$ to denote the matrix units.
\subsection{ Witt algebras $\mathcal{W}_d$}
We  denote by  $ \mathcal{W}_d$ the derivation Lie algebra of the
Laurent polynomial algebra $A_d=\C[x_1^{\pm1},x_2^{\pm1}, . . .
,x_d^{\pm1}]$. For $i\in\{1,2,\dots,d\}$, set $\partial
_i=x_i\frac{\partial}{\partial x_i}$; and for any
$a=(a_1,a_2,\cdots, a_d)^T\in\mathbb{Z}^d$ (considered as column
vectors), set $x^a=x_1^{a_1}x_2^{a_2}\cdots x_d^{a_d}$.

We fix the vector space $\mathbb{C}^d$ of $d\times 1$ matrices.
Denote the standard basis by $\{e_1,e_2,...,e_d\}$. Let
$(\,\cdot\,|\, \cdot\, )$ be the standard symmetric bilinear form
such that $(u | v)=u^Tv\in\mathbb{C}$.  For $u \in \mathbb {C}^d$
and $r\in \mathbb{Z}^d$, we denote
$D(u,r)=x^r\sum_{i=1}^du_i\partial_i$. Then we have
$$[D(u,r),D(v,s)]=D(w,r+s),\,u,v\in \mathbb {C}^d, r,s\in \mathbb {Z}^d,$$
where $w=(u | s)v-(v | r)u$. Note that for any $u,v,z,y\in
\mathbb{C}^d$, both $uv^T$ and $xy^T$ are $d\times d$ matrices, and
 \begin{equation*}(uv^T)(zy^T)=(v|z)uy^T.\end{equation*}
 We know that $\h=\span\{\partial_1, \partial_2, ... , \partial_d\}$ is the Cartan
 sunalgebra of $\mathcal{W}_d$. A weight $\mathcal{W}_d$-module is a
 $\mathcal{W}_d$-module $V$ so that $V=\oplus_{\lambda\in\h^*}V_{\lambda}$
 where $$V_{\lambda}=\{v\in V : hv=\lambda(h)v \,\,\forall\,\, h\in\h\}.$$

\subsection{ Defining $\mathcal{W}_d$-modules} Let us first recall Shen's Witt modules from \cite{Sh}.
For  any  $\alpha\in \mathbb{C}^d, b\in\C$ and $\gl_d$-module $V$ on
which the  identity matrix acts as the scalar $b$, let
$F^\alpha_b(V)=V\otimes A_d$. For simplicity we write $v(n) = v
\otimes x^n$ for any $v\in V, n\in\Z^d$.
 Then $F^\alpha_b(V)$  becomes a
$\mathcal{W}_d$-module if we define the following actions
 \begin{equation}D(u,r)v(n)=\Big((u \mid n+\alpha)v+
 (ru^T)v\Big)(n+r),\end{equation}
where $u\in\mathbb{C}^d$, $v\in V$,  $ n, r\in\mathbb{Z}^d$. It is
easy to see that the module $F^\alpha_b(V)$ obtained from any $V$ is
always a weight module over $\mathcal{W}_d$.

When $V$ is finite dimensional, the following Theorem was given
 by Eswara Rao, see \cite{E2} or \cite{GZ}.

\begin{theorem}\label{t} Let $\alpha\in \mathbb{C}^d, b\in\C$, and $V$ be
 a finite  dimensional irreducible module over $\gl_d$ on  which the identity matrix acts as the scalar
$b$. Then $F^\alpha_b(V)$ is a reducible $\mathcal{W}_d$-module if
and only if the highest weight of $V$ is the fundament weight
$\omega_k$ of $\gl_d$ and $b=k$, where $k\in\Z$ with $1\leq k\leq
d-1$.
\end{theorem}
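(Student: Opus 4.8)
The plan is to treat the two directions separately: the reducible cases come from a de Rham type differential, and in every other case I would establish irreducibility by a generation argument inside the weight spaces. Throughout write $\rho\colon\gl_d\to\operatorname{End}(V)$ for the action, so that $(2.1)$ reads $D(u,r)v(n)=\bigl((u\mid n+\alpha)v+\rho(ru^T)v\bigr)(n+r)$, and recall that the weight space of weight $n+\alpha$ is $V\otimes x^n$. For the ``if'' direction, suppose $V\cong\Lambda^k\C^d$ as a $\gl_d$-module with $1\le k\le d-1$; then the identity acts by $k$, so $b=k$. Identifying $\C^d=\Lambda^1$, define $\Phi\colon F^\alpha_{k-1}(\Lambda^{k-1})\to F^\alpha_k(\Lambda^k)$ by $\Phi(\eta(n))=((n+\alpha)\wedge\eta)(n)$. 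I would check directly that $\Phi$ is a $\mathcal{W}_d$-homomorphism: in $\Phi(D(u,r)\eta(n))-D(u,r)\Phi(\eta(n))$ one uses the Leibniz rule for the rank one matrix $ru^T$ together with $(ru^T)(n+\alpha)=(u\mid n+\alpha)r$, and every term cancels except $(r\wedge\rho(ru^T)\eta)(n+r)$; since $\rho(ru^T)$ replaces one wedge factor of $\eta$ by a multiple of $r$, this lies in $r\wedge r\wedge\Lambda^{k-2}=0$. Thus $\operatorname{Im}\Phi$ is a submodule of $F^\alpha_k(\Lambda^k)$; it is nonzero because $(n+\alpha)\wedge\eta\ne0$ for suitable $n$ and $\eta$ (which needs $k\ge1$), and it is proper because $\Phi\circ\Phi=0$ gives $\operatorname{Im}\Phi\subseteq\ker\bigl(\Phi\colon F^\alpha_k(\Lambda^k)\to F^\alpha_{k+1}(\Lambda^{k+1})\bigr)$, a proper subspace exactly when this next $\Phi\ne0$, i.e.\ when $k\le d-1$. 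This explains the range $1\le k\le d-1$.

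For the converse I would argue by contraposition: if $V$ is not such an exterior power then $F^\alpha_b(V)$ is irreducible. Let $0\ne W=\bigoplus_n W_n$ be a submodule, $W_n=W\cap(V\otimes x^n)$. A useful exact identity is $D(u,-r)D(u,r)=(u\mid n+\alpha)^2\operatorname{id}_V$ on $V\otimes x^n$ whenever $(u\mid r)=0$, because then $(ru^T)^2=0$, so $\rho(ru^T)$ is nilpotent and $D(u,r)=(u\mid n+\alpha)\operatorname{id}_V+\rho(ru^T)$ is invertible as soon as $(u\mid n+\alpha)\ne0$; choosing such $u$ for each needed $r$ shows every $W_n\ne0$. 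Next, the length two compositions $D(u,r)D(v,-r)$ act on $V\otimes x^n$ as $(\text{scalar})\operatorname{id}_V+\rho(r\beta^T)$ with $\beta=(v\mid n+\alpha)u-(u\mid n+\alpha)v\perp(n+\alpha)$; letting $r$ and $\beta$ vary, together with the Cartan operators $D(u,0)=(u\mid n+\alpha)\operatorname{id}_V$, shows that $W_n$ is stable under $\rho(\mathfrak{q}_{n+\alpha})$, where $\mathfrak{q}_\xi=\C\operatorname{id}\oplus\{X\in\gl_d:X\xi=0\}=\operatorname{Stab}(\C\xi)$ is the stabilizer of the line $\C\xi$.

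The heart of the argument, and the step I expect to be hardest, is to promote $\rho(\mathfrak{q}_{n+\alpha})$-stability to full $\rho(\gl_d)$-stability. The naive idea of conjugating by invertible shifts fails instructively: the matrix $g=\operatorname{id}+ru^T/(u\mid n+\alpha)\in\mathrm{GL}_d$ underlying $D(u,r)$ satisfies $g(n+\alpha)=n+r+\alpha$, so it carries $\C(n+\alpha)$ to $\C(n+r+\alpha)$ and hence conjugates $\mathfrak{q}_{n+r+\alpha}$ back to $\mathfrak{q}_{n+\alpha}$, producing nothing new. This line tracking is precisely what lets the subspaces $(n+\alpha)\wedge\Lambda^{k-1}$ assemble into $\operatorname{Im}\Phi$ when $V=\Lambda^k$. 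Thus a proper submodule is the same datum as a family of proper subspaces $W_n\subsetneq V$ that are $\rho(\mathfrak{q}_{n+\alpha})$-stable and are carried by every $D(u,r)$ into $W_{n+r}$, and the crux is to prove that such an equivariant family exists only for $V\cong\Lambda^k\C^d$ with $b=k$ (then $W_n=(n+\alpha)\wedge\Lambda^{k-1}$). I would attack this by a highest weight analysis, examining a highest weight vector and the constraints that the rank one symbols $\rho(ru^T)$ and the line tracking force on the $W_n$, and matching the resulting central character to pin down $b=k$. When no such family exists, the length $\ge3$ compositions, whose linear symbols run through all of $\rho(ru^T)$ with $r$ and $u$ free, generate $\rho(\gl_d)$ on $V\otimes x^n$, so by Burnside's theorem $W_n\in\{0,V\}$; combined with $W_n\ne0$ this yields $W_n=V\otimes x^n$ for all $n$, hence $W=F^\alpha_b(V)$.
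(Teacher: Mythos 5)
First, a point of comparison: the paper does not prove this theorem at all --- it quotes it from Eswara Rao \cite{E1,E2} (with a simplified proof in \cite{GZ}) --- so there is no internal argument to measure you against. Judged on its own, your ``if'' direction is essentially correct and is the standard construction: $\Phi(\eta(n))=((n+\alpha)\wedge\eta)(n)$ is the de~Rham differential, the cancellation down to $r\wedge\rho(ru^T)\eta=0$ is right, and $\Phi\circ\Phi=0$ together with the nonvanishing of the next differential (which needs $k\le d-1$) does exhibit $\operatorname{Im}\Phi$ as a nonzero proper submodule of $F^\alpha_k(\Lambda^k)$.

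The converse, however, contains a genuine gap, and you flag it yourself. The steps you actually carry out are fine: $W_n\neq0$ for all $n$ via $D(u,-r)D(u,r)=(u\mid n+\alpha)^2$ for $u\perp r$ (modulo a word about the degenerate directions $n+\alpha\in\C r$ or $n+\alpha=0$), and stability of $W_n$ under $\rho(\mathfrak{q}_{n+\alpha})$ via the degree-two compositions. But $\mathfrak{q}_{n+\alpha}$ is a maximal parabolic of $\gl_d$, and a finite dimensional irreducible $V$ typically has many proper $\mathfrak{q}_\xi$-stable subspaces (e.g.\ $\xi\odot\C^d\subset S^2\C^d$, or $\xi\wedge\Lambda^{k-1}\subset\Lambda^k$ for any $b$), so the per-weight-space information cannot by itself decide irreducibility, and in particular cannot see $b$ at all. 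Everything hinges on the compatibility of the whole family $\{W_n\}$ under the maps $D(u,r)\colon W_n\to W_{n+r}$, and on showing that such a family of proper subspaces forces $V\cong\Lambda^k$ \emph{and} $b=k$. That is exactly the step you describe only as ``I would attack this by a highest weight analysis'' --- it is announced, not executed, and it is where all the work of \cite{E1,GZ} lives. The closing Burnside argument is also circular as written: ``when no such family exists, the length $\ge3$ compositions generate $\rho(\gl_d)$'' presupposes the classification you are trying to prove, and you never exhibit compositions whose operators preserve $W_n$ while having linear symbol outside $\mathfrak{q}_{n+\alpha}$. Finally, note that the literal contrapositive you aim at is false at the edges: for the one-dimensional modules $\Lambda^0$ (with $b=0$) and $\Lambda^d$ (with $b=d$) and $\alpha\in\Z^d$, the module $F^\alpha_b(V)$ \emph{is} reducible (e.g.\ $\C\,v(-\alpha)$ is a trivial submodule when $b=0$), so a correct proof of the ``only if'' direction must explicitly isolate these cases --- further evidence that the missing classification step carries real content.
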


\subsection{Description of $F^\alpha_b(V)$}
The following useful lemma from linear algebra is obvious.

\begin{lemma}\label{l}   For $n=(n_1,n_2,...,n_d), u=(u_1,u_2,...,u_d)\in\C^d$,
let $$g(n,u)=\sum_{a,b \in\Z_+^d}c_{a,b}n^au^b$$ be a polynomial in
$2d$ variables $n_i, u_j$ with coefficients
$c_{a,b}\in\emph{End}(V)$. Let $V^*$ be a subspace of $ V$. For
$v\in V$,
   if $g(n,u)v\in V^* $ for all $n\in\Z^d,u\in\C^d$, then
  $c_{a,b}v\in V^*$ for any $a,b\in\Z_+^d$.
\end{lemma}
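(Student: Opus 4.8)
The plan is to reduce the operator-valued polynomial identity to the elementary fact that a polynomial over $\C$ in finitely many variables which vanishes on an infinite grid of points is identically zero. Passing to the quotient makes this precise. Let $\pi\colon V\to W:=V/V^*$ be the canonical projection; the hypothesis says $\pi\big(g(n,u)v\big)=0$ for all $n\in\Z^d$ and all $u\in\C^d$, and the goal is to show $\pi(c_{a,b}v)=0$ for every $a,b\in\Z_+^d$. I would use repeatedly the following standard fact: if $p(t)=\sum_a w_a t^a$ is a polynomial in $m$ variables with coefficients $w_a$ in a complex vector space, and $p$ vanishes on all of $\Z^m$ (equivalently on all of $\C^m$), then every $w_a=0$. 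This follows by composing with an arbitrary linear functional to reduce to scalar polynomials, and then inducting on the number of variables using that $\Z$ is infinite.

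First I would separate the dependence on $u$. Fix $n\in\Z^d$ and group the terms of $g$ by the exponent $b$, writing $\pi\big(g(n,u)v\big)=\sum_{b}\pi\big(h_b(n)v\big)\,u^b$, where $h_b(n)=\sum_a c_{a,b}n^a\in\operatorname{End}(V)$. For this fixed $n$ the left-hand side vanishes for every $u\in\C^d$, so the displayed expression is a $W$-valued polynomial in the variables $u_1,\dots,u_d$ that is identically zero on $\C^d$. By the standard fact above, each coefficient vanishes: $\pi\big(h_b(n)v\big)=0$, i.e.\ $\sum_a c_{a,b}n^a v\in V^*$, for every $b$ and every $n\in\Z^d$.

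Next I would remove the dependence on $n$. Fix $b$ and consider $\sum_a \pi(c_{a,b}v)\,n^a$, a $W$-valued polynomial in $n_1,\dots,n_d$ that, by the previous paragraph, vanishes for all $n\in\Z^d$. The only difference from the first step is that $n$ now runs over the lattice $\Z^d$ rather than all of $\C^d$; but $\Z^d$ is Zariski dense, so the standard fact still forces every coefficient to vanish, giving $\pi(c_{a,b}v)=0$, that is $c_{a,b}v\in V^*$, for all $a,b\in\Z_+^d$, as desired.

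There is essentially no serious obstacle here; the only point requiring a little care is that the test points for $n$ lie in $\Z^d$ and not in $\C^d$, so one must observe that a nonzero polynomial cannot vanish on the full integer lattice. Treating the $u$- and $n$-dependence in two separate stages, rather than as a single polynomial in $2d$ variables, keeps this distinction transparent and lets the same one-variable induction be reused in both steps.
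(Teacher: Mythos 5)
Your argument is correct: passing to the quotient $V/V^{*}$ and killing the coefficients in two stages (first in $u$ over $\C^{d}$, then in $n$ over the Zariski-dense lattice $\Z^{d}$) is exactly the standard linear-algebra argument intended here. The paper itself offers no proof of this lemma, declaring it ``obvious,'' so your write-up simply supplies the routine details the authors omit.
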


Now we need the following characterization on finite dimensional
irreducible $\sl_d$-modules.

\begin{lemma}\label{l2} Let $V$ be an irreducible  $\sl_d$-module(not necessarily weight
module).
\begin{enumerate}
\item For any $i,j : 1\leq i\neq j\leq d$, $E_{ij}$ acts injectively  or locally nilpotently on
$V$.

\item  The module $V$ is finite
dimensional iff $E_{ij}$ acts locally nilpotently on $V$ for any
$i,j : 1\leq i\neq j\leq d$.
\end{enumerate}
\end{lemma}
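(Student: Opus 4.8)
The plan is to prove part (1) first and then derive part (2) from it. For part (1), I would fix $i\neq j$ and consider the $\mathfrak{sl}_2$-triple $\{E_{ij}, E_{ji}, E_{ii}-E_{jj}\}$ inside $\mathfrak{sl}_d$. The key observation is that the kernel $K=\{v\in V : E_{ij}^m v = 0 \text{ for some } m\}$ of local nilpotency and the set of vectors annihilated by no power behave dichotomously because of irreducibility. Concretely, I would set $K=\{v\in V: E_{ij}^{m}v=0 \text{ for some }m\geq 1\}$, the locally nilpotent subspace for $E_{ij}$. The first step is to check that $K$ is an $\mathfrak{sl}_d$-submodule: since $\operatorname{ad}(E_{ij})$ is nilpotent on the finite-dimensional space $\mathfrak{sl}_d$, for any $x\in\mathfrak{sl}_d$ one has $E_{ij}^{N}x = \sum_k \binom{N}{k}(\operatorname{ad}(E_{ij})^k x)E_{ij}^{N-k}$ in the enveloping algebra, and for $N$ large enough every term kills a given $v\in K$, so $xv\in K$. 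By irreducibility $K=0$ or $K=V$: if $K=0$ then $E_{ij}$ is injective (no nonzero vector is killed by any power, in particular by the first), and if $K=V$ then $E_{ij}$ is locally nilpotent. This establishes the dichotomy.

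For part (2), the forward direction is immediate: if $V$ is finite dimensional and irreducible over $\mathfrak{sl}_d$, then every $E_{ij}$ with $i\neq j$ is a nilpotent matrix acting on a finite-dimensional space, hence acts nilpotently (a fortiori locally nilpotently). The substantive direction is the converse: assuming every $E_{ij}$ ($i\neq j$) acts locally nilpotently, I want to conclude $\dim V<\infty$. The natural route is to invoke the standard integrability criterion from the theory of Lie algebra representations: if all the Chevalley generators (the root vectors $E_{ij}$ for both positive and negative roots, equivalently all off-diagonal $E_{ij}$) act locally nilpotently on an irreducible module, then the module is a highest-weight integrable module, and for the finite-dimensional simple Lie algebra $\mathfrak{sl}_d$ every integrable irreducible module is finite dimensional.

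The main obstacle I expect is making the converse rigorous without tacitly assuming $V$ is already a weight module, since the lemma explicitly allows $V$ to be non-weight. The cleanest way around this is to show that local nilpotency of all $E_{ij}$ forces the Cartan subalgebra $\mathfrak{h}=\operatorname{span}\{E_{ii}-E_{jj}\}$ to act semisimply, so that $V$ automatically acquires a weight-space decomposition. This follows because each $\mathfrak{sl}_2$-triple attached to a pair $(i,j)$ acts locally finitely (both $E_{ij}$ and $E_{ji}$ are locally nilpotent), and a module over $\mathfrak{sl}_2$ on which both nilpotent generators act locally nilpotently is a direct sum of finite-dimensional $\mathfrak{sl}_2$-modules, forcing $E_{ii}-E_{jj}$ to act diagonalizably with integer eigenvalues. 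Once $V$ is a weight module with all $E_{ij}$ locally nilpotent, one picks a highest weight vector (nonzero, by local nilpotency of the positive root vectors and a maximality argument on the weight poset), generates an integrable highest-weight module, and concludes finite-dimensionality by the classical classification. I would present this last step by citing the standard integrability result rather than reproving it.
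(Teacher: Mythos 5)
Your proof of part (1) is essentially the paper's own argument: both rest on the identity $E_{ij}^{N}X=\sum_{k}\binom{N}{k}\bigl((\ad E_{ij})^{k}X\bigr)E_{ij}^{N-k}$ together with the $\ad$-nilpotency of $E_{ij}$; the paper starts from a vector with $E_{ij}v=0$ and shows all of $U(\sl_d)v=V$ is locally nilpotent, while you package the same computation as "the locally nilpotent part is a submodule," which is a harmless repackaging. For part (2) your converse takes a genuinely different route. The paper simply invokes Theorem 1 of Mazorchuk--Zhao [MZ1] twice: local nilpotency of the $E_{ij}$ with $i<j$ makes $V$ a simple highest weight module, local nilpotency of those with $i>j$ makes it a lowest weight module, and a module that is both is finite dimensional. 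That citation is tailored exactly to the difficulty you correctly identify -- that $V$ is not assumed to be a weight module -- and disposes of it in two lines. Your route instead forces $\mathfrak{h}$-diagonalizability through the $\sl_2$-triples and then appeals to integrability. This works, but be aware that the two facts you propose to cite are precisely where the content lies: (i) that an $\sl_2$-module on which $e$ and $f$ are locally nilpotent is a sum of finite-dimensional submodules is true but not free -- one must show, e.g. by induction on the $f$-nilpotency degree, that $h$ acts locally finitely and diagonalizably on $\ker e$, and then pass from the $h_{ij}$-semisimple part (a nonzero $\sl_d$-submodule) to all of $V$ by irreducibility; and (ii) the "maximality argument on the weight poset" producing a highest weight vector is not immediate for a possibly infinite support -- one needs the $W$-invariance of the support plus finiteness of root strings, or else you are reproving the very statement of [MZ1]. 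So: correct in outline, more self-contained in spirit, but longer once the deferred lemmas are actually supplied; the paper's choice of citation is what makes its proof short.
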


\begin{proof} (1). If $E_{ij}$ is not injective on $V$, then $E_{ij}v=0$ for some nonzero vector $v$
in $V$. It is known that $\text{ad}E_{ij}$ is locally nilpotent on
  the universal enveloping algebra $U(\sl_d)$ of $\sl_d$. This
means that,   for any  $X\in U(\sl_d)$ we have  $(\text{ad}E_{ij})^kX=0$
for sufficiently large $k$. Then from the identity
$$ E_{ij}^kXv=((\text{ad}E_{ij})^kX)v=0,$$
for sufficiently large $k$.  Since $V$ is irreducible, $V=U(\sl_d)v$. Thus
$E_{ij}$ acts locally nilpotently on $V$.

(2). If $V$ is finite dimensional, clearly $E_{ij}$ acts locally
nilpotently on $V$ for any $i,j : 1\leq i\neq j\leq d$.

Now suppose that   $E_{ij}$ acts locally nilpotently on $V$ for any
$i,j : 1\leq i\neq j\leq d$. By the condition that $E_{ij}$ acts
locally nilpotently on $V$ for any $i,j\in\Z$ with $1\leq i < j\leq
d$ and using Theorem 1 in \cite{MZ1}, we see that $V$ is a highest
weight module.  By the condition that $E_{ij}$ acts locally
nilpotently on $V$ for any $i,j\in\Z$ with $1\leq j<i\leq d$ and
using the same Theorem, we can see that $V$ is a lowest weight
module. Thus $V$ is finite dimensional.
\end{proof}


Now we are ready to prove our main result in this paper.

\begin{theorem}Let $V$ be an infinite dimensional irreducible module over $\gl_d$
 on  which the identity matrix acts as a scalar
$b\in\C$. Then $F^\alpha_b(V)$ is an irreducible module over
$\mathcal{W}_d$ for any $\alpha\in\C^d$.
\end{theorem}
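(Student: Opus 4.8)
The plan is to take a nonzero $\mathcal{W}_d$-submodule $W\subseteq F^\alpha_b(V)$ and show it is everything. Since the $\partial_i=D(e_i,0)$ act on $v\otimes x^n$ by the scalar $n_i+\alpha_i$, the space $F^\alpha_b(V)$ is a weight module whose distinct weight spaces are exactly the $V(n)=V\otimes x^n$, $n\in\Z^d$; hence $W$, being stable under $\partial_1,\dots,\partial_d$, splits as $W=\bigoplus_n W_n(n)$ with $W_n=\{v\in V:v(n)\in W\}\subseteq V$. I would fix $n_0$ with $W_{n_0}\neq0$, and (after possibly replacing $n_0$ by an adjacent index via the relations below) assume $\beta:=n_0+\alpha\neq0$. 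The proof then splits into (i) a \emph{local} statement $W_{n_0}=V$, and (ii) a \emph{spreading} statement $W_{n_0}=V\Rightarrow W_n=V$ for all $n$, which gives $W=F^\alpha_b(V)$.

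For (i) I would extract algebraic relations inside $V$ from the action. Applying $D(u,r)$ to $v(n_0)$ with $v\in W_{n_0}$ gives $\big((u\mid\beta)v+(ru^T)v\big)(n_0+r)\in W$, a polynomial in $u\in\C^d$ with coefficients in $\operatorname{End}(V)$; by Lemma~\ref{l} each coefficient lies in $W_{n_0+r}$, yielding the first-order relations
\begin{equation*}
\beta_j v+\sum_i r_i E_{ij}v\in W_{n_0+r},\qquad v\in W_{n_0},\ r\in\Z^d,\ 1\leq j\leq d.
\end{equation*}
Next I would compose two such maps, computing $D(u',-r)D(u,r)v(n_0)\in W_{n_0}$. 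The essential point is that the term $(ru'^T)(ru^T)$ here must be read as a \emph{composition of operators} on $V$, i.e. the image of a length-two word in $U(\gl_d)$, and \emph{not} as a matrix product: although $E_{pq}^2=0$ in $\gl_d$, the operator $E_{pq}(E_{pq}v)$ on $V$ need not vanish. Expanding, comparing coefficients again (Lemma~\ref{l}, in $u$, $u'$ and $r$), specializing $r=te_p$ and reading off the $t^2$-coefficient, I obtain the second-order relations
\begin{equation*}
E_{pa}\big(E_{pb}v\big)\in W_{n_0}\quad(a\neq p,\ \text{all }b),\qquad\text{and, by iterating out/back $k$ times,}\quad E_{pq}^{\,2m}v\in W_{n_0}\ (m\geq1),
\end{equation*}
for all $v\in W_{n_0}$, together with the first-order congruence $\beta_a E_{pb}v\equiv\beta_b E_{pa}v\ (\mathrm{mod}\ W_{n_0})$.

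Now Lemma~\ref{l2} enters decisively: since $V$ is infinite dimensional, some $E_{pq}$ ($p\neq q$) acts \emph{injectively}, so $E_{pq}^{\,k}\neq0$ for every $k$, and the displayed relations are genuine constraints rather than vacuous identities. This is exactly the feature that fails for the reducible finite-dimensional modules of Theorem~\ref{t}: on a fundamental (e.g. the standard) representation $E_{pq}$ acts as a nilpotent matrix and $E_{pq}^2$ really is $0$. To finish (i) my plan is to feed the mixed-direction specializations $r=e_p+e_{p'}$ into the second-order relations; their polarizations couple different rows and should show that $W_{n_0}$ is invariant under the associative subalgebra $\mathcal{A}\subseteq\operatorname{End}(V)$ generated by all products $E_{ia}E_{kb}$ (and, since $\beta\neq0$, by the first-order combinations above). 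The main obstacle — the crux of the whole theorem — is to prove that \emph{when some $E_{pq}$ is injective this algebra $\mathcal{A}$ already acts irreducibly on $V$}; granting this, the nonzero $\mathcal{A}$-submodule $W_{n_0}$ must equal $V$. I expect this to follow from the irreducibility of $V$ as a $\gl_d$-module together with the injectivity of $E_{pq}$, which rules out the degeneration producing proper submodules in the finite-dimensional case, via a density/Schur-type argument.

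Finally, for (ii): once $W_{n_0}=V$, the first-order relations hold for every $v\in V$, and taking $u=e_q$ with the injective $E_{pq}$ shows that $D(e_q,r)v(n_0)=\big(\beta_q v+\sum_i r_i E_{iq}v\big)(n_0+r)$ is nonzero for a suitable $v$, so $W_{n_0+r}\neq0$; applying (i) at the base point $n_0+r$ gives $W_{n_0+r}=V$. Since any two elements of $\Z^d$ are joined by a path of unit steps, this forces $W_n=V$ for all $n$, whence $W=F^\alpha_b(V)$ and $F^\alpha_b(V)$ is irreducible.
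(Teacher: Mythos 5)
There is a genuine gap at what you yourself flag as the crux: you never prove that the associative subalgebra $\mathcal{A}\subseteq\operatorname{End}(V)$ generated by the quadratic words $E_{ia}E_{kb}$ (plus the first-order combinations) acts irreducibly on $V$, you only ``expect'' it to follow from a density/Schur-type argument. Irreducibility of $V$ over $\gl_d$ gives density of the image of $U(\gl_d)$ by Jacobson density, but it does not by itself give irreducibility for a proper subalgebra generated by degree-two elements, and the injectivity of one $E_{pq}$ is not an obvious substitute. Since your step (i) reduces entirely to this unproven statement, the argument as written does not close. A secondary (fixable) issue: your first-order relations $\beta_j v+\sum_i r_iE_{ij}v\in W_{n_0+r}$ land in the \emph{other} weight components $W_{n_0+r}$, not in $W_{n_0}$, so they do not directly make $W_{n_0}$ stable under anything; this is why you are forced back onto the quadratic words and hence onto the unproven density claim.

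The paper sidesteps this entirely with a different decomposition, and it is worth seeing why it works. Instead of trying to prove $W_{n_0}=V$ for a single weight, set $W^*=\bigcap_{n\in\Z^d}W_n$. Your own first-order relations then become useful: for $v\in W^*$ the vector $v(m-n)$ lies in $W$ for \emph{all} $m,n$, and applying $D(u,n)$ and comparing the coefficient of $n_iu_j$ via Lemma~\ref{l} shows $E_{ij}v\in W_m$ for every $m$, i.e.\ $E_{ij}v\in W^*$. So $W^*$ is a genuine $\gl_d$-submodule, and irreducibility of $V$ over $\gl_d$ finishes the proof with no density argument needed. The only remaining point is that $W^*\neq0$, and this is where Lemma~\ref{l2} enters exactly as in your sketch: computing $D(u,m-n)D(u,n)v(0)$ for $0\neq v\in W_0$ and extracting the coefficient of $n_s^2u_t^2$ gives $E_{st}^2v\in W_m$ for all $m$, hence $E_{st}^2v\in W^*$, and injectivity of $E_{st}$ (available because $V$ is infinite dimensional) gives $E_{st}^2v\neq0$. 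I would recommend restructuring your proof around the intersection $W^*$; your computations of the first- and second-order relations are essentially the right ones, but aimed at the wrong target space.
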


\begin{proof} From Lemma 2.3 we know that the action $E_{st}$ on $V$ is
injective for some $s, t: 1\le s\ne t\le d$.

Suppose $W$ is a nonzero  $\mathcal{W}_d$-submodule of
$F^\alpha_b(V)$. Next we will show that $W=F^\alpha_b(V)$. Since
$F^\alpha_b(V)$ is a weight module over $\mathcal{W}_d$, then
$W=\oplus_{n\in\Z^d}M_n\otimes t^n$, where $W_n$ are subspaces of
$V$. Let $W^*=\bigcap_{n\in\Z^d}W_n$. We will show that $W^*$ is a
nonzero $\gl_d$-submodule of $V$.

\

\noindent\textbf{Claim 1.} $W^*$ is nonzero.

Since $W$ is nonzero, without loss of generality, we assume $W_0\neq
0$. Choose a nonzero vector $v\in W_0$, fix some $m\in\Z^d$. For any
$n\in\Z^d, u\in\C^d$, we have
\begin{equation*}\begin{split}
&D(u, m-n)D(u,n)v(0)\\
= & \Big((u \mid  \alpha+n)+
 (m-n)u^T\Big)\Big((u \mid \alpha)+
 nu^T\Big)v(m), \\
 = & \Big((u \mid  \alpha+n)+
 \sum_{i,j}(m_i-n_i)u_jE_{ij}\Big)\Big((u \mid \alpha)+
 \sum_{k,l}n_ku_lE_{kl}\Big)v(m),\\
 = & \Big(\sum_{i,j}\sum_{k,l} (m_i-n_i)u_j n_ku_lE_{ij}E_{kl}+(u \mid \alpha)\sum_{i,j}(m_i-n_i)u_jE_{ij}\\
 & +(u \mid  \alpha+n) \sum_{k,l}n_ku_lE_{kl}+(u \mid \alpha)(u \mid n+\alpha)\Big) v(m)\\
 =& \Big(\sum_{i,j}\sum_{k,l} (m_i-n_i)u_j n_ku_lE_{ij}E_{kl}+\sum_{i,j}((u \mid \alpha)m_i+
 (u \mid  n)n_i)u_jE_{ij}\\
 & +(u \mid \alpha)(u \mid n+\alpha)\Big) v(m).
\end{split}\end{equation*}

 Set \begin{equation*}\begin{split}
 g(n,u)= & \sum_{i,j}\sum_{k,l} (m_i-n_i)u_j n_ku_lE_{ij}E_{kl}\\
 & +\sum_{i,j}((u \mid \alpha)m_i+ (u \mid  n)n_i)u_jE_{ij}\\
 &  +(u \mid \alpha)(u \mid n+\alpha)
  \end{split}\end{equation*} which is a polynomial in $n_i, u_j, 1\leq i,j \leq d$ with coefficients in
  $\text{End}(V)$. Then  $g(n,u)v\in W_m$ for any
  $n\in\Z^d, u\in\C^d$.

Since $s\neq t$, the coefficient of $n_s^2u_t^2$  in $g(n,u)$ is
$-E_{st}E_{st}$.  By Lemma \ref{l},
 we can deduce that $E_{st}E_{st}v\in W_m$ for any $m\in\Z^d$. Thus $E_{st}E_{st}v\in W^*$.
 By the fact that the action $E_{st}$ on $V$ is injective, we see that $W^*\neq 0$. Claim 1 follows.

\

\noindent\textbf{Claim 2.} $W^*$ is a $\gl_d$-submodule of $V$.

By the definition of $W^*$,  we see that  $v(m-n)\in W_{m-n}\otimes
x^{n-m}$ for any $n, m\in \Z^d$, $v\in W^*$. For any $n\in\Z_+^d,
u\in\C^n$ we compute that
\begin{equation*}\begin{split}
&D(u, n)v(m-n)\\
= & \Big((u \mid  \alpha+m-n)+nu^T)v(m), \\
 = &((u \mid  \alpha+m)-\sum_{i}u_in_i+\sum_{i, j}n_iu_jE_{ij})v(m)\in W_m\otimes x^m.
\end{split}\end{equation*}
Considering the coefficient of $n_iu_j$ in the right hand of the
last equality and using Lemma 2.2 again, we know that $E_{ij}v\in
W_m$ for all $m\in\Z^d$ and $i, j : 1\leq i, j\leq d$. This implies
that $E_{ij}v\in W^*$ for all $v\in W^*$ and $i, j : 1\leq i, j\leq
d$. Therefore $W^*$ is a $\gl_d$-submodule of $V$.

By Claim 1 and Claim 2, we deduce that $W^*=V$. Therefore
$W=F^\alpha_b(V)$, hence $F^\alpha_b(V)$ is an irreducible module
over $\mathcal{W}_d$.
\end{proof}



Combining  Theorem 2.4 and Theorem \ref{t}, we obtain the following:

\begin{theorem} Let $\alpha\in \mathbb{C}^d, b\in\C$, and $V$ be
 an irreducible module over $\gl_d$ on  which the identity matrix acts as the scalar
$b$. Then $F^\alpha_b(V)$ is a reducible module over $\mathcal{W}_d$
if and only if $V$ is  isomorphic to the finite dimensional module
whose highest weight is the fundament weight $\omega_k$ and $b=k$,
where $k\in\Z$ with $1\leq k\leq d-1$.
\end{theorem}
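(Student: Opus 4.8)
The plan is to prove the biconditional by a clean dichotomy on $\dim V$, invoking our main result Theorem 2.4 to handle the infinite-dimensional case and the already-established Theorem \ref{t} to handle the finite-dimensional case. The key observation is that an irreducible $\gl_d$-module $V$ is either finite or infinite dimensional, and these two regimes are covered disjointly and exhaustively by the two cited results, so the equivalence should close up with no genuinely new computation.

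For the sufficiency direction I would start from the assumption that $V$ is isomorphic to the finite-dimensional irreducible $\gl_d$-module with highest weight $\omega_k$ on which the identity acts as $b=k$, for some $k$ with $1\le k\le d-1$. Since such a $V$ is finite dimensional, Theorem \ref{t} applies verbatim and immediately yields that $F^\alpha_b(V)$ is reducible. For the necessity direction I would assume $F^\alpha_b(V)$ is reducible and first rule out that $V$ is infinite dimensional: were it infinite dimensional, Theorem 2.4 would force $F^\alpha_b(V)$ to be irreducible, contradicting the hypothesis. Hence $V$ must be finite dimensional, at which point Theorem \ref{t} applies; its reducibility criterion then forces the highest weight of $V$ to be $\omega_k$ and $b=k$ for some $k$ with $1\le k\le d-1$, which is precisely the asserted description of $V$.

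Because each half of the argument reduces to one of the two cited theorems, I do not expect a substantial obstacle; the proof is essentially a corollary-level combination. The only point requiring genuine care is the logical bookkeeping of the dichotomy, namely confirming that Theorem 2.4 really eliminates \emph{every} infinite-dimensional possibility (so that reducibility forces finite dimensionality) and that Theorem \ref{t} then pins down both the highest weight and the value of $b$ simultaneously. Once these two facts are lined up, the biconditional follows, and this is exactly the content of the remark ``Combining Theorem 2.4 and Theorem \ref{t}'' preceding the statement.
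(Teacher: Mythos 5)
Your proposal is correct and matches the paper exactly: the paper derives this theorem precisely by "Combining Theorem 2.4 and Theorem \ref{t}", i.e., the same dichotomy on $\dim V$ with Theorem \ref{t} handling the finite-dimensional case and Theorem 2.4 ruling out reducibility in the infinite-dimensional case. No further comment is needed.
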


Next we will give  a concrete example  of $F^\alpha_b(V)$ for which
$V$ is infinite dimensional.

\noindent{\bf Example.} In \cite{N}, Nilsson constructed a class of irreducible non-weight modules over
$\sl_d$. Explicitly, for $1\leq k\leq d-1$, let $h_k=E_{k,k}-\frac{1}{d}\sum\limits_{i=1}^dE_{ii}$. Then for each
$\beta\in\C$, the polynomial algebra $\C[h_1,\dots,h_{d-1}]$ is an irreducible
$\sl_d$-module under the action
\begin{equation*}\begin{split}h_i\cdot f(h_1,\dots,h_{d-1})=&\ h_i f(h_1,\dots,h_{d-1}),\\
E_{i,d}\cdot f(h_1,\dots,h_{d-1})=&\ (\beta+\sum_{k=1}^{d-1}h_k)(h_i-\beta-1)f(h_1,\dots,h_i-1,\dots,h_{d-1}),\\
E_{d,j}\cdot f(h_1,\dots,h_{d-1})=&\ -f(h_1,\dots,h_j+1,\dots,h_{d-1}),\\
E_{i,j}\cdot f(h_1,\dots,h_{d-1})=&\ (h_i-\beta-1)f(h_1,\dots,h_i-1,\dots,h_j+1,\dots,h_{d-1}),
\end{split}\end{equation*}
where $1\leq i,j\leq d-1$. By Theorem 2.4, $F^\alpha_b(\C[h_1,\dots,h_{d-1}])$ is an irreducible
weight $\mathcal{W}_d$-module with infinite dimensional weight spaces.

\

Finally, we can consider the isomorphism between two modules
described above.
\begin{theorem} Let  $\alpha,\beta\in\C^d$, $b, b'\in\C$,  and $V, W$ be two irreducible $\sl_d$-modules.
Then the $\mathcal{W}_d$-modules $F^\alpha_b(V)$ and
$F^\beta_{b'}(V')$ are isomorphic if and only if $b=b',
\alpha-\beta\in\Z^d$ and $V\cong V'$ as $\sl_d$-modules.
\end{theorem}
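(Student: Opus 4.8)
The plan is to treat the two directions separately, with the ``only if'' direction carrying essentially all the weight. For the ``if'' direction I would write down two elementary isomorphisms. First, for every $s\in\Z^d$ the degree shift $v\otimes x^n\mapsto v\otimes x^{n-s}$ is a $\mathcal{W}_d$-isomorphism $F^\alpha_b(V)\to F^{\alpha+s}_b(V)$; this is checked directly from $(2.1)$, since replacing $\alpha$ by $\alpha+s$ is exactly compensated by the shift in the $x$-degree. Second, if $b=b'$ then an $\sl_d$-isomorphism $\psi\colon V\to V'$ is automatically a $\gl_d$-isomorphism (the identity matrix $I$ acts by the common scalar $b$ on both sides), and $v\otimes x^n\mapsto\psi(v)\otimes x^n$ is then a $\mathcal{W}_d$-isomorphism $F^\beta_b(V)\to F^\beta_b(V')$. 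Composing these with $s=\alpha-\beta\in\Z^d$ yields $F^\alpha_b(V)\cong F^\beta_{b'}(V')$.

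For the ``only if'' direction, suppose $\Phi\colon F^\alpha_b(V)\to F^\beta_{b'}(V')$ is a $\mathcal{W}_d$-isomorphism. Since $\partial_i=D(e_i,0)$ acts on $v\otimes x^n$ by the scalar $n_i+\alpha_i$, the set of $\h$-weights of $F^\alpha_b(V)$ is $\alpha+\Z^d$, and likewise $\beta+\Z^d$ for the target; as $\Phi$ preserves weights this forces $\alpha-\beta\in\Z^d$. Using the (unconditional) degree-shift isomorphism above I may replace $\beta$ by $\alpha$ and assume $\alpha=\beta$. Then $\Phi$ preserves each weight space $V\otimes x^n$, so there are linear isomorphisms $\phi_n\colon V\to V'$ with $\Phi(v\otimes x^n)=\phi_n(v)\otimes x^n$. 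Writing $A_j(r)=re_j^T=\sum_i r_iE_{ij}$ and intertwining $\Phi$ with $D(u,r)$, the action $(2.1)$ is linear in $u$, so comparing the coefficients of $u_j$ gives the master relation
\begin{equation*}
\phi_{n+r}\big(((n_j+\alpha_j)\,\mathrm{id}+A_j(r))v\big)=\big((n_j+\alpha_j)\,\mathrm{id}+A_j(r)\big)\phi_n(v),
\end{equation*}
valid for all $n,r\in\Z^d$, all $v\in V$ and all $j$, where on the left $A_j(r)$ acts through the $\gl_d$-module $V$ and on the right through $V'$.

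The heart of the argument is to extract from the master relation a single $\gl_d$-isomorphism $V\to V'$. Composing $D(u',-r)$ with $D(u,r)$ on the fixed weight space $V\otimes x^n$, the rank-one identity $(uv^T)(zy^T)=(v\mid z)uy^T$ makes the quadratic $\gl_d$-terms collapse, and one finds that $\phi_n$ intertwines the action of the whole subalgebra $\mathfrak p_n=\{M\in\gl_d:M(n+\alpha)=0\}$. For two weights $n,n'$ with $n+\alpha$ and $n'+\alpha$ linearly independent one has $\mathfrak p_n+\mathfrak p_{n'}=\gl_d$, so it would suffice to know that one and the same map serves for different $n$. I therefore expect the main obstacle to be exactly this compatibility across weight spaces: a single weight space only recovers a proper subalgebra of $\gl_d$, and one must show that the ``twist'' measuring the failure of a fixed $\phi_n$ to commute with all of $\gl_d$ vanishes. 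The plan for this step is to feed special $r$ into the master relation---those with $r_j=0$, which make $A_j(r)$ square-zero nilpotent and the operator $(n_j+\alpha_j)\,\mathrm{id}+A_j(r)$ invertible whenever $n_j+\alpha_j\neq0$---and to compare two distinct indices $j\neq j'$ (with a small separate variant when $d=2$); this should force $\phi_{n+r}=\phi_n$ and kill the twist, producing a single $\gl_d$-isomorphism $\psi\colon V\to V'$. Once $\psi$ is in hand, $V\cong V'$ as $\sl_d$-modules is immediate, and applying $\psi$ to $Iv=bv$ together with $I\psi(v)=b'\psi(v)$ gives $b=b'$, which completes the proof.
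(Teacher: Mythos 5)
Your ``if'' direction and the initial reductions of the ``only if'' direction are correct: the support argument gives $\alpha-\beta\in\Z^d$, the shift isomorphism reduces to $\alpha=\beta$, the maps $\phi_n$ on weight spaces are well defined, and the master relation obtained by comparing coefficients of $u_j$ is right. You have also correctly isolated the genuinely delicate point: a single weight space only yields that $\phi_n$ intertwines the proper subalgebra $\mathfrak{p}_n=\{M\in\gl_d:M(n+\alpha)=0\}$, so one must reconcile the maps $\phi_n$ across weight spaces. (The paper's own proof is a two-line coefficient comparison that applies $\rho$ to $D(u,n)v(0)$ and writes the image with one and the same map $\sigma$ on every weight space; it does not engage with the issue you raise.) The problem is that at exactly this crucial step your argument is only a plan (``I expect'', ``this should force''), and the mechanism you propose demonstrably does not deliver the conclusion.

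Concretely, fix $r$ with $r_j=r_{j'}=0$ and $n_j+\alpha_j,\,n_{j'}+\alpha_{j'}\neq 0$, and set $P=(n_j+\alpha_j)\,\mathrm{id}+A_j(r)$, $Q=(n_{j'}+\alpha_{j'})\,\mathrm{id}+A_{j'}(r)$. Comparing the two factorizations $\phi_{n+r}=P\phi_nP^{-1}=Q\phi_nQ^{-1}$ only tells you that $\phi_n$ intertwines $Q^{-1}P$. A direct computation using $A_{j'}(r)A_j(r)=r_{j'}\,re_j^T=0$ gives
$Q^{-1}P=c\bigl(\mathrm{id}+r\bigl(\tfrac{e_j^T}{n_j+\alpha_j}-\tfrac{e_{j'}^T}{n_{j'}+\alpha_{j'}}\bigr)\bigr)$
with $c$ a nonzero scalar, and the matrix $r\bigl(\tfrac{e_j^T}{n_j+\alpha_j}-\tfrac{e_{j'}^T}{n_{j'}+\alpha_{j'}}\bigr)$ annihilates $n+\alpha$, i.e.\ already lies in $\mathfrak{p}_n$. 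So the comparison of two indices reproduces only the $\mathfrak{p}_n$-intertwining you already established and cannot force $\phi_{n+r}=\phi_n$. The same collapse occurs if you instead play the conjugation by $P$ against the $\mathfrak{p}_{n+r}$-intertwining of $\phi_{n+r}$: since $P(n+\alpha)=(n_j+\alpha_j)(n+r+\alpha)$, conjugation by $P$ carries $\mathfrak{p}_{n+r}$ exactly onto $\mathfrak{p}_n$, yielding nothing new. Thus the extraction of a single $\sl_d$-isomorphism $V\to V'$ (and with it the conclusion $b=b'$) is not proved; closing the gap requires a genuinely new input, e.g.\ higher-order compositions such as $D(u'',-r-s)D(u',s)D(u,r)$ restricted to a fixed weight space, or an argument exploiting the irreducibility of $V$ and $V'$.
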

\begin{proof}The sufficiency of the condition is clear. Now suppose that $\rho :
F^\alpha_b(V)\rightarrow F^\beta_{b'}(V')$ a $\mathcal{W}_d$-module
isomorphism. Since isomorphic modules have the same weight set, we
have $\alpha+\Z^d=\beta+\Z^d$, i.e., $\alpha-\beta\in\Z^d$. Without
loss of generality, we assume $\alpha=\beta$. Then $\rho$ induces a
linear isomorphism $\sigma$ from $V$ to $V'$ such that
$$\rho(v(0))=\sigma(v)(0), \ \ \forall\  v\in\  V.$$
From $\rho(D(u,n)v(0))=D(u,n)\rho(v(0))$, we see that
\begin{equation}\sum_{i,j}n_iu_j\sigma(E_{ij}v)=\sum_{i,j}n_iu_jE_{ij}\sigma(v)\end{equation}for
all $n\in\Z^d, u\in\C^d$. Comparing the coefficient of $n_iu_j$ on
both sides of the formula (2.2), we know that
$\sigma(E_{ij}v)=E_{ij}\sigma(v) $ for any $v\in V'$ and $1\leq
i,j\leq d$. Therefore
 $V\cong V'$ as $\gl_d$-modules.
\end{proof}

\

\begin{center}
\bf Acknowledgments
\end{center}

\noindent G.L. is partially supported by NSF of China (Grant
11301143) and  the school fund of Henan University(2012YBZR031, 0000A40382); K.Z. is partially supported by  NSF of China (Grant
11271109) and NSERC.


\vspace{1mm}

\noindent   \noindent G.L.: College of Mathematics and Information
Science, Henan University, Kaifeng 475004, China. Email:
liugenqiang@amss.ac.cn

\vspace{0.2cm} \noindent K.Z.: Department of Mathematics, Wilfrid
Laurier University, Waterloo, ON, Canada N2L 3C5,  and College of
Mathematics and Information Science, Hebei Normal (Teachers)
University, Shijiazhuang, Hebei, 050016 P. R. China. Email:
kzhao@wlu.ca


\begin{thebibliography}{99999}

\bibitem[B]{B} Y. Billig, Jet modules, Canad. J. Math., 59 (2007), no.4, 712-729.
\bibitem[BB]{BB} S. Berman and Y. Billig, Irreducible representations for toroidal Lie algebra,
{\it J. Algebra}, 221(1999), 188-231

\bibitem[BF]{BF} Y. Billig and V. Futorny, Classification of simple $W_n$-modules with
finite-dimensional weight spaces,
preprint, arXiv:1304.5458v1.
\bibitem[BMZ]{BMZ} Y. Billig, A. Molev, R. Zhang, Differential equations in vertex algebras and
simple modules for the Lie algebra of vector fields on a torus, Adv.
Math., 218(2008), no.6, 1972-2004.
\bibitem[BZ]{BZ} Y. Billig and K. Zhao, Weight modules over exp-polynomial Lie algebras, J. Pure Appl.
Algebra, 191(2004), 23-42
\bibitem[DMP]{DMP}I. Dimitrov, O. Mathieu, I. Penkov, On the structure of weight modules.
Trans. Amer. Math. Soc. 352 (2000), no. 6, 2857-2869.
\bibitem[E1]{E1} S. Eswara Rao, Irreducible representations of
the Lie-algebra of the diffeomorphisms of a $d$-dimensional torus,
J. Algebra, {\bf 182}  (1996),  no. 2, 401--421.
\bibitem[E2]{E2} S. Eswara Rao, Partial classification of modules for Lie algebra of
diffeomorphisms of d-dimensional torus, J. Math. Phys., 45 (8),
(2004) 3322-3333.

 \bibitem[GLZ]{GLZ} X. Guo, G. Liu and K. Zhao, Irreducible Harish-Chandra modules over extended
Witt algebras, Ark. Mat., 52 (2014), 99-112.
 \bibitem[GZ]{GZ} X.Guo and K.Zhao, Irreducible weight modules over Witt algebras, Proc. Amer. Math. Soc.,
139(2011), 2367-2373.
\bibitem[L1]{L1} T. A. Larsson, Multi dimensional Virasoro algebra, Phys.
Lett.,
B 231, 94-96(1989).
\bibitem[L2]{L2} T. A. Larsson, Central and
non-central extensions of multi-graded Lie algebras, J. Phys., A 25,
1177-1184(1992).
\bibitem[L3]{L3} T. A. Larsson,  Conformal fields:
A class of representations of Vect (N), Int. J. Mod. Phys., A 7,
6493-6508(1992).
\bibitem[L4]{L4} T. A. Larsson, Lowest energy
representations of non-centrally extended diffeomorphism algebras,
Commun. Math. Phys., 201, 461-470(1999).
\bibitem[L5]{L5} T. A.
Larsson, Extended diffeomorphism algebras and trajectories in jet
space, Commun. Math. Phys., 214, 469-491(2000).
\bibitem[M]{M} O. Mathieu; Classification of irreducible weight modules. Ann. Inst.
Fourier (Grenoble) 50 (2000), no. 2, 537-592.



\bibitem[MZ1]{MZ1}  V. Marzuchuk, K. Zhao, Characterization of simple highest weight
modules. Can. Math. Bull. 56(3), 606-614 (2013)
\bibitem[MZ2]{MZ2} V. Marzuchuk and K. Zhao, Supports of weight modules over Witt
algebras, Proc. Roy. Soc. Edinburgh Sect., A 141(2011), no. 1,
155-170.
\bibitem[N]{N} J. Nilsson, Simple $\sl_{n+1}$-module structures on $U(h)$, arXiv:1312.5499.

\bibitem[Sh]{Sh} G. Shen, Graded modules of graded Lie algebras of
Cartan type. I. Mixed products of modules,  Sci. Sinica Ser., A {\bf
29}  (1986),  no. 6, 570-581.
\bibitem[TZ]{TZ}H. Tan, K. Zhao,  $\mathcal{W}_n^+$ and
$\mathcal{W}_n$-module structures on $U(h)$, arXiv:1401.1120.
\bibitem[Z]{Z} K. Zhao, Weight modules over generalized Witt algebras with
1-dimensional weight spaces, Forum Math., Vol.16, No.5, 725-748.

\end{thebibliography}
\end{document}